\newtheorem{theorem}{Theorem}
\newtheorem{Prop}{Proposition}
\newtheorem{Cor}{Corollary}
\newtheorem{Lem}{Lemma}
\begin{document} 

\title{Explicit expressions for the related numbers of higher order Appell polynomials}  

\author{
Su Hu
\\
\small Department of Mathematics\\
\small South China University of Technology\\ 
\small Guangzhou, Guangdong 510640 China\\ 
\small \texttt{mahusu@scut.edu.cn}\\\\  
Takao Komatsu
\\ 
\small School of Mathematics and Statistics\\
\small Wuhan University\\
\small Wuhan 430072 China\\
\small \texttt{tkomatsu31@msn.com, komatsu@whu.edu.cn}
}

\date{
\small MR Subject Classifications: 11B68, 11B37, 11C20, 15A15, 33C15.}

\maketitle 

\begin{abstract}
In this note,  by using the  Hasse-Teichm\"uller derivatives,  we obtain  two explicit expressions for the related numbers of higher order Appell polynomials. 
One of them presents a determinant expression for the related numbers of higher order Appell polynomials, 
which involves several determinant expressions of special numbers, such as the higher order generalized hypergeometric Bernoulli and Cauchy numbers,
thus recovers the classical determinant expressions of Bernoulli and Cauchy numbers stated in an article by Glaisher in 1875.

\noindent 
{\bf Keywords:} Appell polynomials, Bernoulli numbers, hypergeometric Bernoulli numbers, hypergeometric functions, explicit expressions, determinants, recurrence relations.
\end{abstract}

\section{Introduction} 

The  Bernoulli numbers $B_{n}$ are defined by the generating function 
\begin{equation*}
\frac{t}{e^t-1}=\frac{1}{\frac{e^t-1}{t}}=\sum_{n=0}^\infty B_n\frac{t^n}{n!}\,
\end{equation*}
and the Bernoulli polynomials $B_{n}(z)$ are defined by 
\begin{equation}\label{BerP}
\frac{te^{zt}}{e^t-1}=\frac{e^{zt}}{\frac{e^t-1}{t}}=\sum_{n=0}^\infty B_n(z)\frac{t^n}{n!}.
\end{equation}
These numbers and polynomials have a long history, which arise from Bernoulli's calculations of power sums in 1713, that is, 
$$
\sum_{j=1}^{m}j^{n}=\frac{B_{n+1}(m+1)-B_{n+1}}{n+1}
$$ 
(see \cite[p.5, (2.2)]{Sun}). They have many applications in modern number theory, such as modular forms and Iwasawa theory \cite{Iwa}. 

For $r\in\mathbb{N}$, in 1924, N\"orlund \cite{Norlund} generalized (\ref{BerP}) to give a definition of  higher order Bernoulli polynomials and numbers 
\begin{equation*}
\Big(\frac{t}{e^t-1}\Big)^{r}e^{zt}=\frac{e^{zt}}{\big(\frac{e^t-1}{t}\big)^{r}}=\sum_{n=0}^\infty B_n^{(r)}(z)\frac{t^n}{n!}.
\end{equation*}  
We also have a similar expression of multiple power sums $$\sum_{l_{1},\ldots,l_{n}=0}^{m-1}(t+l_{1}+\cdots+\l_{n})^{k}$$ in terms of higher order Bernoulli polynomials (see \cite[Lemma 2.1]{HKI}).

The Euler polynomials are defined by the generating function  
\begin{equation*}
\frac{2e^{zt}}{e^t+1}=\frac{e^{zt}}{\frac{e^t+1}{2}}=\sum_{n=0}^\infty E_n(z)\frac{t^n}{n!}.
\end{equation*}
These polynomials were introduced by Euler who studied the alternating power sums, that is,
\begin{equation*}
\sum_{j=1}^{m}(-1)^{j+1}j^{n}=-\frac{(-1)^{{m}}E_{n}(m+1)+E_{n}(0)}{2}
\end{equation*} 
(see \cite[p.5, (2.3)]{Sun}). 
 We may also define the higher order Euler polynomial as follows 
\begin{equation*}
\Big(\frac{2}{e^t+1}\Big)^{r}e^{zt}=\frac{e^{zt}}{\big(\frac{e^t+1}{2}\big)^{r}}=\sum_{n=0}^{\infty} E_n^{(r)}(z)\frac{t^n}{n!}.
\end{equation*}
 
In 1880, Appell \cite{Appell} found that the Bernoulli and Euler polynomials are the sequences $(A_{n}(z))_{n\geq 0}$ in the polynomial ring $\mathbb{C}[z],$ which satisfy the recurrence relation $A_{n}^{'}(z)=nA_{n-1}(z)$ for $n\geq 1$ and $A_{0}(z)$ is a non-zero constant polynomial. And the sequence $(A_{n}(z))_{n\geq 0}$, now known as 
Appell polynomials, can also be defined equivalently by using generating functions, that is, let $S(t)=\sum_{n=0}^{\infty}a_{n}\frac{t^{n}}{n!}$ be a formal power series in $\mathbb{C}[[t]]$ and $a_{0}\neq 0$, we have 
\begin{equation}\label{Appell}
\sum_{n=0}^{\infty}A_{n}(z)\frac{t^{n}}{n!}=S(t)e^{zt}.
\end{equation} 
Recently,  Bencherif, Benzaghou and Zerroukhat \cite{BBZ} established an identity for some  Appell polynomials generalizing explicit formulas for generalized Bernoulli numbers and polynomials.  

Since $a_{0}\neq 0$, there exists the formal power series (for some $d_n\in\mathbb C$) 
\begin{equation}\label{ft} 
f(t)=\frac{1}{S(t)}=\sum_{n=0}^{\infty}d_{n}\frac{t^{n}}{n!}
\end{equation}
in $\mathbb{C}[[t]]$, and (\ref{Appell}) becomes  
\begin{equation}\label{Appell2}
\frac{e^{zt}}{f(t)}=\sum_{n=0}^{\infty}A_{n}(z)\frac{t^{n}}{n!}.
\end{equation}  
We can also define the higher order Appell polynomials by the generating function 
\begin{equation}\label{Appellh}
\frac{e^{zt}}{(f(t))^{r}}=\sum_{n=0}^{\infty}A_{n}^{(r)}(z)\frac{t^{n}}{n!}
\end{equation}  
(see \cite[Th\'eor\`eme 1.1]{BBZ}).
As in the classical case, we call $a_{n}^{(r)}=A_{n}^{(r)}(0)$ the related numbers of higher order Appell polynomials, that is,   
\begin{equation}\label{Appellhn}
\frac{1}{(f(t))^{r}}=\sum_{n=0}^{\infty}a_{n}^{(r)}\frac{t^{n}}{n!} 
\end{equation} 
and $a_{n}=a_{n}^{(1)}$ the related numbers of Appell polynomials. 

In what follows, we  assume $d_{0}=1$ to normalize the expansion of $f(t)$ in (\ref{ft}). 

The above definitions unify several generalizations of Bernoulli numbers and polynomials. For example, 
\begin{enumerate} 
\item {\it The higher order generalized hypergeometric Bernoulli numbers and polynomials.} 
  
Let $(x)^{(n)}=x(x+1)\dots(x+n-1)$ ($n\ge 1$) with $(x)^{(0)}=1$ be the rising factorial. 
For positive integers $M$, $N$ and $r$, put 
$$
f(t)={}_1 F_1(M;M+N;t)=\sum_{n=0}^\infty\frac{(M)^{(n)}}{(M+N)^{(n)}}\frac{t^n}{n!}, 
$$ 
the confluent hypergeometric function (see \cite{D2002}), in (\ref{Appell}),
we obtain the higher order generalized hypergeometric Bernoulli polynomials 
 $B_{M,N,n}^{(r)}(z),$ that is, 
\begin{equation*}  
\frac{e^{z t}}{\bigl({}_1F_1(M;M+N;t)\bigr)^r}=\sum_{n=0}^\infty B_{M,N,n}^{(r)}(z)\frac{t^n}{n!}\,.
\end{equation*} 
When $z=0$,  $B_{M,N,n}^{(r)}=B_{M,N,n}^{(r)}(0)$ are the higher order generalized hypergeometric Bernoulli numbers.  
When $M=1$, 
the higher order hypergeometric Bernoulli polynomials $B_{N,n}^{(r)}(z)=B_{1,N,n}^{(r)}(z)$ are studied by Hu and Kim in \cite{HK}.  
When $r=M=1$, we have $B_{N,n}(z)=B_{1,N,n}^{(1)}(z)$ and
\begin{equation}\label{hb} 
\frac{e^{z t}}{{}_1F_1(1;1+N;t)}=\frac{t^N e^{zt}/N!}{e^t-T_{N-1}(t)}=\sum_{n=0}^{\infty}B_{N,n}(z)\frac{t^{n}}{n!},
\end{equation}   
which is the hypergeometric Bernoulli polynomials defined by Howard in ~\cite{Ho1,Ho2} and was studied by Kamano in \cite{Kamano2}.
From (\ref{hb}), if $r=M=N=1$, we have $B_{n}(z)=B_{1,1,n}^{(1)}(z)$, which reduces to the classical Bernoulli polynomials (\ref{BerP}).

\item {\it The higher order generalized hypergeometric Cauchy numbers.}
For positive integers $M$, $N$ and $r$, put 
$$
f(t)={}_2 F_1(M,N;N+1;-t)=\sum_{n=0}^\infty\frac{(M)^{(n)}(N)^{(n)}}{(N+1)^{(n)}}\frac{(-t)^n}{n!}, 
$$ 
the Gauss hypergeometric function, in (\ref{Appellhn}), we obtain the higher order generalized hypergeometric Cauchy numbers $c_{M,N,n}^{(r)},$ that is, 
\begin{equation}
\frac{1}{\bigl({}_2 F_1(M,N;N+1;-t)\bigr)^r}=\sum_{n=0}^\infty c_{M,N,n}^{(r)}\frac{t^n}{n!}\,
\label{gen:ghgbp-r}
\end{equation}  
which has been studied by Komatsu and Yuan in \cite{KY}.  
When $r=1$, $c_{M,N,n}=c_{M,N,n}^{(1)}$ are the generalized hypergeometric Cauchy numbers which has been studied by Komatsu in \cite{Ko3}.  
When $r=M=N=1$, $c_{n}=c_{1,1,n}^{(1)}$ are classical Cauchy numbers  (see \cite[p. 383. Eq. (1)]{KY}),  
which can also be defined by the well-known generating function
\begin{equation}\label{Cauchy}
\frac{t}{\log(1+t)}=\sum_{n=0}^{\infty}c_{n}\frac{t^n}{n!}
\end{equation}
(see \cite{Zhao}).
\end{enumerate} 
 
In 1875, Glaisher  stated the following  classical determinant expression of  Bernoulli numbers $B_{n}$ in an article  (\cite[p.53]{Glaisher}):
\begin{equation}\label{Berd-g}
B_n=(-1)^n n!\left|
\begin{array}{ccccc} 
\frac{1}{2!}&1&&&\\  
\frac{1}{3!}&\frac{1}{2!}&&&\\ 
\vdots&\vdots&\ddots&1&\\ 
\frac{1}{n!}&\frac{1}{(n-1)!}&\cdots&\frac{1}{2!}&1\\ 
\frac{1}{(n+1)!}&\frac{1}{n!}&\cdots&\frac{1}{3!}&\frac{1}{2!}
\end{array} 
\right|\,. 
\end{equation}

In 2010, Costabile and Longo \cite[Theorem 2]{Constabile1} obtained a determinant expression of the first order Appell polynomials (\ref{Appell2}), and their result reduces to
a different   determinant expression for Bernoulli numbers:
\begin{equation*}
B_n=\frac{(-1)^n}{(n-1)!}\left|
\begin{array}{cccccc} 
\frac{1}{2}&\frac{1}{3}&\frac{1}{4}&\cdots&\frac{1}{n}&\frac{1}{n+1}\\  
1&1&1&\cdots&1&1\\ 
0&2&3&\cdots&n-1&n\\ 
0&2&\binom{3}{2}&\cdots&\binom{n-1}{2}&\binom{n}{2}\\ 
\vdots&\vdots&\vdots&\ddots&\vdots&\vdots\\ 
0&0&0&\cdots&\binom{n-1}{n-2}&\binom{n}{n-2}
\end{array} 
\right|\,
\end{equation*}
for $n\geq 1$ (also see \cite[p.7]{Constabile2}).

In 2016,  from the integral expression for the generating function of Bernoulli polynomials, Qi and Chapman \cite[Theorems 1 and 2]{Qi2016} got two closed forms for Bernoulli polynomials, which provided an explicit formula for computing these special polynomials in terms of Stirling numbers of the second kind $S(n,k)$. According to Wiki~\cite{Wiki} (also see \cite{Qi2016}),  ``In mathematics, a closed-form expression is a mathematical expression that can be evaluated in a finite number of operations. It may contain constants, variables, certain `well-known' operations (e.g., $+~-~\times ~\div$), and functions (e.g., $n$th root, exponent, logarithm, trigonometric functions, and inverse hyperbolic functions), but usually no limit."  It needs to mention that \cite[Theorem 2]{Qi2016} also  shows a new determinant expression of Bernoulli polynomials, which reduces to another
determinant expression of Bernoulli numbers. Recently, by directly applying the generating functions instead of their integral expressions, Hu and Kim  \cite{HKR2} obtained new closed form and determinant expressions of Apostol-Bernoulli polynomials~\cite{Apostol1, Apostol2, Luo}. 

In 2017, applying the  Hasse-Teichm\"uller derivatives~\cite{GN}, Komatsu and Yuan \cite{KY} presented a determinant expression of the higher order generalized hypergeometric Cauchy numbers defined as in (\ref{gen:ghgbp-r}) (see \cite[Theorem 4]{KY}). When $r=M=N=1$,
their formula recovers the classical determinant expression of Cauchy numbers $c_{n}$ (\ref{Cauchy}) which was also stated in the article of  Glaisher    (\cite[p.50]{Glaisher}).

In this note,  generalizing the methods in \cite{KY}, by using the  Hasse-Teichm\"uller derivatives~\cite{GN} (see Sec.2 below for a brief review),  we shall obtain  two explicit expressions for the related numbers of higher order Appell polynomials (Theorems \ref{prp1h} and~\ref{th1-r}). Theorem \ref{th1-r}  gives a determinant expression for the related numbers of higher order Appell polynomials defined as in (\ref{Appellhn}), which involves several determinant expressions of special numbers, such as the higher order generalized hypergeometric Bernoulli and Cauchy numbers stated above,
thus recovers the classical determinant expressions of Bernoulli and Cauchy numbers (see the last section).

\section{Hasse-Teichm\"uller derivatives}

We shall introduce the Hasse-Teichm\"uller derivatives 
in order to prove our  results.    

Let $\mathbb{F}$ be a field of any characteristic, $\mathbb{F}[[z]]$ the ring of formal power series in one variable $z$, and $\mathbb{F}((z))$ the field of Laurent series in $z$. Let $n$ be a nonnegative integer. We define the Hasse-Teichm\"uller derivative $H^{(n)}$ of order $n$ by 
\begin{equation*}
H^{(n)}\left(\sum_{m=R}^{\infty} c_m z^m\right)
=\sum_{m=R}^{\infty} c_m \binom{m}{n}z^{m-n}
\end{equation*}
for $\sum_{m=R}^{\infty} c_m z^m\in \mathbb{F}((z))$, 
where $R$ is an integer and $c_m\in\mathbb{F}$ for any $m\geq R$. Note that $\binom{m}{n}=0$ if $m<n$.  

The Hasse-Teichm\"uller derivatives satisfy the product rule \cite{Teich}, the quotient rule \cite{GN} and the chain rule \cite{Hasse}. 
One of the product rules can be described as follows.  
\begin{Lem}[{\cite{Teich, KY}}]
For $f_i\in\mathbb F[[z]]$ $(i=1,\dots,k)$ with $k\ge 2$ and for $n\ge 1$, we have 
$$
H^{(n)}(f_1\cdots f_k)=\sum_{i_1+\cdots+i_k=n\atop i_1,\dots,i_k\ge 0}H^{(i_1)}(f_1)\cdots H^{(i_k)}(f_k)\,. 
$$ 
\label{productrule2}
\end{Lem} 

The quotient rules can be described as follows.  
\begin{Lem}[\cite{GN, KY}]
For $f\in\mathbb F[[z]]\backslash \{0\}$ and $n\ge 1$,  
we have 
\begin{align} 
H^{(n)}\left(\frac{1}{f}\right)&=\sum_{k=1}^n\frac{(-1)^k}{f^{k+1}}\sum_{i_1+\cdots+i_k=n\atop i_1,\dots,i_k\ge 1}H^{(i_1)}(f)\cdots H^{(i_k)}(f)
\label{quotientrule1}
\\ 
&=\sum_{k=1}^n\binom{n+1}{k+1}\frac{(-1)^k}{f^{k+1}}\sum_{i_1+\cdots+i_k=n\atop i_1,\dots,i_k\ge 0}H^{(i_1)}(f)\cdots H^{(i_k)}(f)\,.
\label{quotientrule2} 
\end{align}   
\label{quotientrules}
\end{Lem}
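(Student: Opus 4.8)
The plan is to prove the first quotient rule (\ref{quotientrule1}) directly and then to read off the second form (\ref{quotientrule2}) from it by a combinatorial regrouping. My preferred route to (\ref{quotientrule1}) starts from the remark, immediate from the definition of $H^{(n)}$ together with the binomial theorem $z^m\mapsto(z+w)^m=\sum_{n\ge 0}\binom{m}{n}z^{m-n}w^n$ applied term by term, that all the Hasse--Teichm\"uller derivatives of a series assemble into its Taylor shift,
$$
f(z+w)=\sum_{n=0}^{\infty}H^{(n)}(f)\,w^n ,
$$
a formal identity in an auxiliary variable $w$ with coefficients in $\mathbb F((z))$. Since shifting commutes with forming reciprocals, $\frac1f(z+w)=1/f(z+w)$, and I would factor $f(z+w)=f\bigl(1+u\bigr)$ with $u=\frac1f\sum_{m\ge 1}H^{(m)}(f)w^m$. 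As $u$ has positive order in $w$, the geometric expansion $\frac{1}{1+u}=\sum_{k\ge 0}(-1)^ku^k$ is formally legitimate in $\mathbb F((z))[[w]]$.

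The calculation then proceeds by expanding $u^{k}=f^{-k}\bigl(\sum_{m\ge1}H^{(m)}(f)w^m\bigr)^{k}$ and using that the coefficient of $w^{n}$ in $\bigl(\sum_{m\ge1}H^{(m)}(f)w^m\bigr)^{k}$ is exactly $\sum_{i_1+\cdots+i_k=n,\,i_j\ge1}H^{(i_1)}(f)\cdots H^{(i_k)}(f)$. Collecting the coefficient of $w^{n}$ in $\frac1f\sum_{k\ge0}(-1)^ku^k$ (the factor $\frac1f\cdot f^{-k}$ producing the $f^{-(k+1)}$) and matching it against $H^{(n)}(1/f)$ on the left-hand side yields (\ref{quotientrule1}) at once, the upper summation limit $k\le n$ being forced by the requirement of $k$ positive parts summing to $n$. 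A self-contained alternative, using only Lemma~\ref{productrule2}, is to apply $H^{(n)}$ to $f\cdot(1/f)=1$, obtain the recursion $H^{(n)}(1/f)=-\frac1f\sum_{a=1}^{n}H^{(a)}(f)H^{(n-a)}(1/f)$, and verify (\ref{quotientrule1}) by induction on $n$; the inductive step amounts to prepending the index $a$ to a composition of $n-a$ into positive parts, which shifts the number of parts by one and reproduces the stated closed form.

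Finally, to pass from (\ref{quotientrule1}) to (\ref{quotientrule2}) I would regroup the unrestricted inner sum in (\ref{quotientrule2}) according to the number of positive indices. A term of $\sum_{i_1+\cdots+i_k=n,\,i_j\ge0}\prod_jH^{(i_j)}(f)$ with exactly $m$ positive entries (so $1\le m\le k$, since $n\ge1$) carries a factor $H^{(0)}(f)^{k-m}=f^{k-m}$ and an all-positive product over the remaining $m$ slots, and there are $\binom km$ choices of those slots; writing $P_m=\sum_{j_1+\cdots+j_m=n,\,j_l\ge1}\prod_lH^{(j_l)}(f)$ this gives $\sum_{i_1+\cdots+i_k=n,\,i_j\ge0}\prod_jH^{(i_j)}(f)=\sum_{m=1}^{k}\binom km f^{k-m}P_m$. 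Because $f^{k-m}/f^{k+1}=f^{-(m+1)}$ independently of $k$, substituting this into the right-hand side of (\ref{quotientrule2}) and comparing the coefficient of $f^{-(m+1)}P_m$ with (\ref{quotientrule1}) reduces the equivalence of the two formulas to the single binomial identity
$$
\sum_{k=m}^{n}\binom{n+1}{k+1}\binom km(-1)^{k}=(-1)^{m}\qquad(1\le m\le n).
$$
This identity is the one genuinely computational point, and the step I expect to require the most care; I would prove it by the substitution $k\mapsto k+1$, which turns the left-hand side into $-\sum_{j}\binom{n+1}{j}\binom{j-1}{m}(-1)^{j}$, and then extract the coefficient of $y^{m}$ in $\frac{(-y)^{n+1}-1}{1+y}$; since $m\le n$ only the summand $-1/(1+y)$ contributes, giving $(-1)^{m+1}$, whence the original sum equals $(-1)^{m}$. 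Everything else in the argument is formal manipulation of power series and routine bookkeeping of compositions.
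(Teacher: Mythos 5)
The paper does not actually prove Lemma \ref{quotientrules}; it imports it from \cite{GN} (see also \cite{KY}), so there is no internal proof to compare yours against. Your argument is correct and self-contained, and it takes a route different from the usual one. For (\ref{quotientrule1}) you use the Taylor-shift identity $f(z+w)=\sum_{n\ge 0}H^{(n)}(f)w^n$, factor $f(z+w)=f(1+u)$ with $u$ of positive order in $w$, and expand geometrically; this is legitimate because $z\mapsto z+w$ is a ring homomorphism $\mathbb F((z))\to\mathbb F((z))[[w]]$ (a standard fact you assert rather than prove) and hence commutes with reciprocals, and extracting the coefficient of $w^n$ produces exactly the compositions of $n$ into $k$ positive parts with the factor $(-1)^k f^{-(k+1)}$. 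Your fallback route --- applying $H^{(n)}$ to $f\cdot(1/f)=1$ via Lemma \ref{productrule2} and inducting --- is essentially the argument of \cite{GN}; the generating-function route buys a one-line derivation at the cost of setting up the shift homomorphism on $\mathbb F((z))$. The passage to (\ref{quotientrule2}) by classifying the weak compositions according to their number $m$ of positive parts, pulling out $H^{(0)}(f)^{k-m}=f^{k-m}$, and reducing the equivalence to the identity $\sum_{k=m}^{n}\binom{n+1}{k+1}\binom{k}{m}(-1)^k=(-1)^m$ is sound, and your evaluation of that sum is correct: after the shift $j=k+1$ the sum is $-[y^m]\bigl(((-y)^{n+1}-1)/(1+y)\bigr)$ (the $j=0$ term $\binom{-1}{m}$ accounts for the $-1$ in the numerator, and the terms $1\le j\le m$ vanish), and since $m\le n$ only $-1/(1+y)$ contributes, giving $(-1)^{m}$ overall; I have also checked the identity numerically for small $n,m$. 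No gaps.
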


\section{Results and their proofs}  

Recall (\ref{Appellh}) and (\ref{ft}) with $d_{0}=1$ for a normalization.  

We have the following proposition on the recurrence for the related numbers of higher order Appell polynomials. 

\begin{Prop} 
For $n\geq 1$, we have
$$ 
\sum_{m=0}^n\sum_{i_1+\cdots+i_r=n-m\atop i_1,\dots,i_r\ge 0}\frac{d_{i_{1}}\cdots d_{i_{r}}}{i_1!\cdots i_r!}\frac{a_{m}^{(r)}}{m!}=0\,. 
$$ 
\label{prp0h-r} 
\end{Prop}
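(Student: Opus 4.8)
The plan is to read the claimed identity as nothing more than a coefficient comparison in the trivial product relation
\begin{equation*}
(f(t))^{r}\cdot\frac{1}{(f(t))^{r}}=1.
\end{equation*}
First I would expand $(f(t))^{r}$ as a formal power series in $t$. Since $f(t)=\sum_{n\ge 0}d_{n}t^{n}/n!$ by (\ref{ft}), the Cauchy product --- equivalently, the Hasse-Teichm\"uller product rule of Lemma \ref{productrule2} applied to $r$ copies of $f$ --- gives
\begin{equation*}
(f(t))^{r}=\sum_{k=0}^{\infty}\Bigg(\sum_{i_1+\cdots+i_r=k\atop i_1,\dots,i_r\ge 0}\frac{d_{i_1}\cdots d_{i_r}}{i_1!\cdots i_r!}\Bigg)t^{k}.
\end{equation*}

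Next I would multiply this series by the defining expansion (\ref{Appellhn}) of $1/(f(t))^{r}$, namely $\sum_{m\ge 0}(a_{m}^{(r)}/m!)t^{m}$, and read off the coefficient of $t^{n}$. Splitting the total degree $n$ as $(n-m)+m$ between the two factors, the Cauchy product yields
\begin{equation*}
[t^{n}]\left((f(t))^{r}\cdot\frac{1}{(f(t))^{r}}\right)=\sum_{m=0}^{n}\sum_{i_1+\cdots+i_r=n-m\atop i_1,\dots,i_r\ge 0}\frac{d_{i_1}\cdots d_{i_r}}{i_1!\cdots i_r!}\frac{a_{m}^{(r)}}{m!},
\end{equation*}
which is precisely the left-hand side of the asserted recurrence.

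Finally, since the product on the left equals the constant $1$, its coefficient of $t^{n}$ vanishes for every $n\ge 1$, which is exactly the claim. The normalization $d_{0}=1$ (so that $f(0)=1$ and hence $a_{0}^{(r)}=1$) is used only to guarantee that $1/(f(t))^{r}$ is a genuine formal power series, so that the coefficient comparison is legitimate; it plays no further role in the identity for $n\ge 1$. There is no genuine difficulty in this argument; the only point demanding care is the bookkeeping in the double sum, namely correctly distributing the degree $n$ between the factor $(f(t))^{r}$ (whose $t^{n-m}$ coefficient is the inner sum over compositions $i_1+\cdots+i_r=n-m$ into nonnegative integers) and the factor $1/(f(t))^{r}$ (contributing $a_{m}^{(r)}/m!$).
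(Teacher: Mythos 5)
Your proposal is correct and follows essentially the same route as the paper: both expand $(f(t))^{r}\cdot(f(t))^{-r}=1$ via the Cauchy (multinomial) product and compare coefficients of $t^{n}$, the only cosmetic difference being that the paper keeps the exponential normalization $t^{l}/l!$ with multinomial coefficients while you absorb the factorials directly into the coefficient of $t^{k}$. No gap.
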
  

\begin{proof} From (\ref{Appellh}) and (\ref{ft}), we have 
\begin{align*}  
1&=\left(\sum_{l=0}^\infty d_{l}\frac{t^l}{l!}\right)^r\left(\sum_{m=0}^\infty a_{m}^{(r)}\frac{t^m}{m!}\right)\\
&=\left(\sum_{l=0}^\infty\sum_{i_1+\cdots+i_r=l\atop i_1,\dots,i_r\ge 0}\frac{l!}{i_1!\cdots i_r!}d_{i_{1}}\cdots d_{i_{r}}\frac{t^{l}}{l!}\right)\left(\sum_{m=0}^\infty a_{m}^{(r)}\frac{t^m}{m!}\right)\\
&=\sum_{n=0}^\infty\sum_{m=0}^n\binom{n}{m}\sum_{i_1+\cdots+i_r=n-m\atop i_1,\dots,i_r\ge 0}\frac{(n-m)!d_{i_{1}}\cdots d_{i_{r}}}{i_1!\cdots i_r!}a_{m}^{(r)}\frac{t^n}{n!}\,. 
\end{align*} 
Comparing the coefficients of $t^{n}$ in the above equality, we get our result.
\end{proof}

By using Proposition \ref{prp0h-r}, we have
\begin{Cor}\label{recurrence} 
\begin{equation}  
a_{n}^{(r)}=-n!\sum_{m=0}^{n-1}\sum_{i_1+\cdots+i_r=n-m\atop i_1,\dots,i_r\ge 0}\frac{d_{i_{1}}\cdots d_{i_{r}}}{i_1!\cdots i_r!}\frac{a_{m}^{(r)}}{m!}
\label{eq0h}
\end{equation}  
with $a_{0}^{(r)}=1.$
\end{Cor}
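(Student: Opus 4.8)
The plan is to read off the recurrence directly from Proposition~\ref{prp0h-r} by splitting the extreme term $m=n$ from the rest of the sum. First I would take the identity of Proposition~\ref{prp0h-r}, which holds for every $n\ge 1$, and write its outer sum over $0\le m\le n$ as the single term with $m=n$ plus the partial sum over $0\le m\le n-1$. The whole content of the argument is to evaluate that isolated $m=n$ term explicitly.

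For the $m=n$ term the constraint on the inner sum reads $i_1+\cdots+i_r=n-m=0$; since each $i_j\ge 0$, this forces $i_1=\cdots=i_r=0$, so the inner sum degenerates to the single product $d_0^{\,r}/(0!)^r$. Invoking the normalization $d_0=1$ adopted after~(\ref{ft}), this product equals $1$, and hence the $m=n$ contribution is exactly $a_n^{(r)}/n!$. Substituting this back, Proposition~\ref{prp0h-r} becomes $\dfrac{a_n^{(r)}}{n!}+\displaystyle\sum_{m=0}^{n-1}\sum_{i_1+\cdots+i_r=n-m\atop i_1,\dots,i_r\ge 0}\dfrac{d_{i_1}\cdots d_{i_r}}{i_1!\cdots i_r!}\dfrac{a_m^{(r)}}{m!}=0$, and solving for $a_n^{(r)}$ by transposing and multiplying through by $n!$ yields~(\ref{eq0h}).

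It remains only to record the initial value $a_0^{(r)}=1$, which I would obtain by evaluating the defining generating function~(\ref{Appellhn}) at $t=0$: the right-hand side reduces to $a_0^{(r)}$, while the left-hand side is $1/(f(0))^r$, and $f(0)=d_0=1$ by~(\ref{ft}) and the same normalization. This confirms $a_0^{(r)}=1$ and completes the statement.

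I do not anticipate a genuine obstacle here, as the corollary is a one-step reorganization of Proposition~\ref{prp0h-r}; the only point demanding care is the clean identification of the $m=n$ term, where one must note that the empty-partition constraint collapses the inner multiple sum to a single summand and that the normalization $d_0=1$ is precisely what makes that summand equal to~$1$. Everything else is a routine transposition.
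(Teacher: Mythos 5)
Your argument is correct and is exactly the one-step rearrangement the paper intends: the paper gives no separate proof of the corollary, simply deriving it from Proposition~\ref{prp0h-r} by isolating the $m=n$ term, which collapses to $a_n^{(r)}/n!$ thanks to the normalization $d_0=1$. Your verification of $a_0^{(r)}=1$ from~(\ref{Appellhn}) at $t=0$ is a sensible extra detail but not a departure from the paper's route.
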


We have an explicit expression for the related numbers of higher order Appell polynomials $a_{n}^{(r)}$.

\begin{theorem} 
For $n\ge 1$, we have 
$$
a_{n}^{(r)}=n!\sum_{k=1}^n (-1)^{k}\sum_{e_1+\cdots+e_k=n\atop e_1,\dots,e_k\ge1}D_r(e_1)\cdots D_r(e_k)\,,
$$
where 
\begin{equation} 
D_r(e)
=\sum_{i_1+\cdots+i_r=e\atop i_1,\dots,i_r\ge 0}\frac{d_{i_{1}}\cdots d_{i_{r}}}{i_1!\cdots i_r!}.  
\label{mre} 
\end{equation}  
\label{prp1h}  
\end{theorem}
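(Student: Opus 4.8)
The plan is to reduce the whole statement to a single formal power series identity and then read off the coefficient of $t^n$ using the Hasse-Teichm\"uller machinery of Section 2. First I would expand the denominator $(f(t))^r$ directly. Writing $f(t)=\sum_{n\ge 0}d_n t^n/n!$ and multiplying out $r$ copies, the multinomial expansion gives
$$(f(t))^r=\sum_{l=0}^\infty D_r(l)\,t^l,$$
where $D_r(l)$ is exactly the quantity defined in (\ref{mre}); since $d_0=1$, the constant term is $D_r(0)=1$. Thus, setting $g=f^r$, we have $g(0)=1$, and the coefficient of $t^i$ in $g$ equals $D_r(i)$.

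Next I would invoke the elementary but crucial observation that for any $h(t)=\sum_m c_m t^m$ one has $H^{(n)}(h)\big|_{t=0}=c_n$: extracting a Taylor coefficient is the same as applying $H^{(n)}$ and evaluating at $t=0$. Since $1/g=1/(f(t))^r=\sum_{n\ge 0}a_n^{(r)}t^n/n!$ by (\ref{Appellhn}), this gives
$$\frac{a_n^{(r)}}{n!}=H^{(n)}\!\left(\frac{1}{g}\right)\bigg|_{t=0}.$$

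The heart of the argument is then to apply the quotient rule (\ref{quotientrule1}) of Lemma \ref{quotientrules} to $H^{(n)}(1/g)$, which writes it as a sum over $k$ of $\frac{(-1)^k}{g^{k+1}}$ times $\sum_{i_1+\cdots+i_k=n,\ i_j\ge 1}H^{(i_1)}(g)\cdots H^{(i_k)}(g)$. Evaluating at $t=0$, I would use the two facts established above: $1/g^{k+1}\big|_{t=0}=1$ because $g(0)=1$, and $H^{(i_j)}(g)\big|_{t=0}=D_r(i_j)$ for each factor. After renaming the summation indices $i_j$ to $e_j$, the product collapses to exactly the claimed formula.

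I do not expect a genuine obstacle; the content is bookkeeping, and the one place needing care is matching the inner constraint $i_1+\cdots+i_k=n$ with all $i_j\ge 1$ in the quotient rule to the constraint $e_1+\cdots+e_k=n$ with $e_j\ge 1$ in the statement, while checking that no constant-term or $k=0$ contribution intrudes (it does not, since $n\ge 1$). As cross-checks I would verify the small cases $n=1,2$ by hand and confirm consistency with the recurrence of Corollary \ref{recurrence}. Finally, a completely elementary alternative that avoids the Hasse-Teichm\"uller derivatives is to substitute $u=g-1=\sum_{l\ge 1}D_r(l)t^l$ into the geometric series $1/(1+u)=\sum_{k\ge 0}(-1)^k u^k$ and read off the coefficient of $t^n$; this reproduces the same expression and serves as an independent verification.
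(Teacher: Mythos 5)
Your proposal is correct and follows essentially the same route as the paper: both hinge on applying the quotient rule (\ref{quotientrule1}) to $1/(f(t))^r$ at $t=0$, with $g(0)=1$ killing the $1/g^{k+1}$ factors and $H^{(e)}(f^r)\big|_{t=0}=D_r(e)$ supplying the factors $D_r(e_j)$. The only cosmetic difference is that you obtain $H^{(e)}(f^r)\big|_{t=0}=D_r(e)$ by multinomial expansion plus coefficient extraction, whereas the paper computes $H^{(i)}(f)\big|_{t=0}=d_i/i!$ and invokes the product rule of Lemma \ref{productrule2} --- the same computation in different clothing.
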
  
\begin{proof} 
We make an application of the Hasse-Teichm\"uller derivatives which were introduced in Sec.2 above.

Put $h(t)=\bigl(f(t)\bigr)^r$, where 
$$
f(t)=\sum_{j=0}^\infty d_{j}\frac{t^j}{j!}\,. 
$$ 
Since 
\begin{align*} 
\left.H^{(i)}(f)\right|_{t=0}&=\left.\sum_{j=i}^\infty \frac{d_{j}}{j!}\binom{j}{i}t^{j-i}\right|_{t=0}\\
&=
\frac{d_{i}}{i!}
\end{align*}  
by the product rule of the Hasse-Teichm\"uller derivative in Lemma \ref{productrule2}, we get 
\begin{align*} 
\left.H^{(e)}(h)\right|_{t=0}&=\sum_{i_1+\cdots+i_r=e\atop i_1,\dots,i_r\ge 0}\left.H^{(i_1)}(f)\right|_{t=0}\cdots\left.H^{(i_r)}(f)\right|_{t=0}\\
&=\sum_{i_1+\cdots+i_r=e\atop i_1,\dots,i_r\ge 0}\frac{d_{i_{1}}\cdots d_{i_{r}}}{i_1!\cdots i_r!}:=D_r(e)\,. 
\end{align*} 
Hence, by the quotient rule of the Hasse-Teichm\"uller derivative in Lemma \ref{quotientrules} (\ref{quotientrule1}), we have 
\begin{align*} 
\frac{a_{n}^{(r)}}{n!}&=\sum_{k=1}^n\left.\frac{(-1)^k}{h^{k+1}}\right|_{t=0}\sum_{e_1+\cdots+e_k=n\atop e_1,\dots,e_k\ge 1}\left.H^{(e_1)}(h)\right|_{t=0}\cdots\left.H^{(e_k)}(h)\right|_{t=0}\\
&=\sum_{k=1}^n(-1)^{k}\sum_{e_1+\cdots+e_k=n\atop e_1,\dots,e_k\ge1}D_r(e_1)\cdots D_r(e_k)\,. 
\end{align*} 
\end{proof}

Now, we show a determinant expression of $a_{n}^{(r)}$.   

\begin{theorem}
For $n\ge 1$, we have 
$$
a_{n}^{(r)}=(-1)^n n!\left|
\begin{array}{ccccc} 
D_r(1)&1&&&\\  
D_r(2)&D_r(1)&&&\\ 
\vdots&\vdots&\ddots&1&\\ 
D_r(n-1)&D_r(n-2)&\cdots&D_r(1)&1\\ 
D_r(n)&D_r(n-1)&\cdots&D_r(2)&D_r(1) 
\end{array} 
\right|\,. 
$$ 
where $D_r(e)$ are given in $(\ref{mre})$. 
\label{th1-r}  
\end{theorem}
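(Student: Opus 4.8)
The plan is to prove the identity by induction on $n$, combining the recurrence for $a_n^{(r)}$ recorded in Corollary \ref{recurrence} with a matching recurrence for the Hessenberg determinant on the right-hand side. Writing $\Delta_n$ for that $n\times n$ determinant and setting $\Delta_0=1$, the first step is to put Corollary \ref{recurrence} into the compact shape
$$
\frac{a_n^{(r)}}{n!}=-\sum_{m=0}^{n-1}D_r(n-m)\frac{a_m^{(r)}}{m!}\qquad(n\ge 1),
$$
where $D_r(e)$ is as in (\ref{mre}) and $D_r(0)=1$ because the only term with $i_1=\cdots=i_r=0$ contributes $d_0^{\,r}=1$. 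The target identity $a_n^{(r)}=(-1)^n n!\,\Delta_n$ is immediate for $n=1$, since $\Delta_1=D_r(1)$ and $a_1^{(r)}=-D_r(1)$ by (\ref{eq0h}).

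The heart of the argument is a companion recurrence for the determinant, namely $\Delta_n=\sum_{i=1}^n(-1)^{i-1}D_r(i)\,\Delta_{n-i}$. I would obtain this by a Laplace expansion of $\Delta_n$ along its first column, whose entries are $D_r(1),\dots,D_r(n)$. Deleting row $i$ and the first column leaves a minor that splits into block-triangular form: the rows above row $i$ vanish on all columns beyond the $i$-th (their only superdiagonal $1$ sits no further right than column $i$), so the minor factors as the product of an upper-left $(i-1)\times(i-1)$ triangular block carrying $1$'s on its diagonal and the bottom-right $(n-i)\times(n-i)$ block, which is again a Toeplitz--Hessenberg determinant of the same form, i.e. $\Delta_{n-i}$. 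Collecting the cofactor signs $(-1)^{i+1}$ then yields the stated recurrence. This block factorization, with the careful bookkeeping of the repeated superdiagonal $1$'s, is the step I expect to require the most care.

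With both recurrences in hand, the induction closes by a routine index shift. Assuming $a_m^{(r)}=(-1)^m m!\,\Delta_m$ for all $0\le m<n$, I substitute $a_m^{(r)}/m!=(-1)^m\Delta_m$ into the compact form of Corollary \ref{recurrence}, replace the summation index by $i=n-m$, and use $(-1)^{-i}=(-1)^i$ to reach
$$
a_n^{(r)}=(-1)^n n!\sum_{i=1}^n(-1)^{i-1}D_r(i)\,\Delta_{n-i}=(-1)^n n!\,\Delta_n,
$$
the last equality being exactly the determinant recurrence. As an alternative that bypasses the induction, one could instead invoke Theorem \ref{prp1h} directly: the determinant recurrence above is the defining recurrence of Trudi's formula, so $\Delta_n=\sum_{k=1}^n(-1)^{n-k}\sum_{e_1+\cdots+e_k=n}D_r(e_1)\cdots D_r(e_k)$, and multiplying by $(-1)^n n!$ reproduces the closed form of Theorem \ref{prp1h} on the nose. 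Either route reduces the whole statement to the single determinant recurrence established by the first-column expansion.
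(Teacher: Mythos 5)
Your proposal is correct and follows essentially the same route as the paper: both argue by induction on $n$, matching the recurrence of Corollary \ref{recurrence}, rewritten as $a_n^{(r)}/n!=-\sum_{m=0}^{n-1}D_r(n-m)\,a_m^{(r)}/m!$, against the cofactor recurrence $\Delta_n=\sum_{l=1}^{n}(-1)^{l-1}D_r(l)\Delta_{n-l}$ of the Toeplitz--Hessenberg determinant. The only cosmetic difference is that you obtain that determinant recurrence by a single Laplace expansion along the first column (using the block-triangular structure of the minors), whereas the paper derives the same recurrence by iterated expansion along the first row; both are valid.
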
  
\begin{proof} 
The proof is an application of the inductive method.

For simplicity, put $A_{n}^{(r)}=\frac{(-1)^n a_{n}^{(r)}}{n!}$. Then, we shall prove that for any $n\ge 1$ 
\begin{equation}  
A_{n}^{(r)}=\left|
\begin{array}{ccccc} 
D_r(1)&1&&&\\  
D_r(2)&D_r(1)&&&\\ 
\vdots&\vdots&\ddots&1&\\ 
D_r(n-1)&D_r(n-2)&\cdots&D_r(1)&1\\ 
D_r(n)&D_r(n-1)&\cdots&D_r(2)&D_r(1) 
\end{array} 
\right|\,.
\label{aNnr}
\end{equation}   
When $n=1$, (\ref{aNnr}) is valid, because by Theorem \ref{prp1h} 
$$
D_r(1)=d_{1}=A_{1}^{(r)}\,. 
$$ 
Assume that (\ref{aNnr}) is valid up to $n-1$. Notice that 
by (\ref{eq0h}), we have 
$$A_{n}^{(r)}=\sum_{l=1}^n(-1)^{l-1}A_{n-l}^{(r)}D_r(l)\,. $$
Thus, by expanding the first row of the right-hand side (\ref{aNnr}), it is equal to 
\begin{align*} 
&D_r(1)A_{n-1}^{(r)}-\left|
\begin{array}{ccccc} 
D_r(2)&1&&&\\  
D_r(3)&D_r(1)&&&\\ 
\vdots&\vdots&\ddots&1&\\ 
D_r(n-1)&D_r(n-3)&\cdots&D_r(1)&1\\ 
D_r(n)&D_r(n-2)&\cdots&D_r(2)&D_r(1) 
\end{array} 
\right|\\
&=D_r(1)A_{n-1}^{(r)}-D_r(2)A_{n-2}^{(r)}\\
&\qquad +\left|
\begin{array}{ccccc} 
D_r(3)&1&&&\\  
D_r(4)&D_r(1)&&&\\ 
\vdots&\vdots&\ddots&1&\\ 
D_r(n-1)&D_r(n-4)&\cdots&D_r(1)&1\\ 
D_r(n)&D_r(n-3)&\cdots&D_r(2)&D_r(1) 
\end{array} 
\right|\\
&=D_r(1)A_{n-1}^{(r)}-D_r(2)A_{n-2}^{(r)}+\cdots
+(-1)^{n-2}\left|
\begin{array}{cc}
D_r(n-1)&1\\
D_r(n)&D_r(1)
\end{array} 
\right|\\
&=\sum_{l=1}^n(-1)^{l-1}D_r(l)A_{n-l}^{(r)}=A_{n}^{(r)}\,.
\end{align*} 
Note that $A_{1}^{(r)}=D_r(1)$ and $A_{0}^{(r)}=1$. 
\end{proof}  

When $r=1$, we have the following determinant expression for the related numbers of  Appell polynomials.  

\begin{theorem}  
For $n\ge 1$, we have 
\begin{align*}
a_{n}
=(-1)^n n!\left|
\begin{array}{ccccc} 
d_{1}&1&&&\\  
\frac{d_{2}}{2!}&d_{1}&&&\\ 
\vdots&\vdots&\ddots&1&\\ 
\frac{d_{n-1}}{(n-1)!}&\frac{d_{n-2}}{(n-2)!}&\cdots&d_{1}&1\\ 
\frac{d_{n}}{n!}&\frac{d_{n-1}}{(n-1)!}&\cdots&\frac{d_{2}}{2!}&d_{1}
\end{array} 
\right|\,.
\end{align*}  
\label{det:an}
\end{theorem}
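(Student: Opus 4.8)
The plan is to obtain this statement as the immediate specialization of Theorem \ref{th1-r} to the case $r=1$. The only computation required is to evaluate the quantity $D_r(e)$ from $(\ref{mre})$ when $r=1$. In that case the multi-index summation $\sum_{i_1+\cdots+i_r=e,\ i_1,\dots,i_r\ge 0}$ collapses to the single term $i_1=e$, so that $D_1(e)=d_e/e!$. Substituting this into the determinant of Theorem \ref{th1-r} replaces each subdiagonal entry $D_1(j)$ by $d_j/j!$, while the superdiagonal entries $1$ and the vanishing entries above them are unaffected, yielding precisely the displayed matrix.

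Concretely, first I would record the identity $D_1(e)=d_e/e!$ for every $e\ge 1$, together with $D_1(0)=d_0=1$, which is consistent with the normalization $d_0=1$ fixed after $(\ref{ft})$. Next I would invoke Theorem \ref{th1-r} with $r=1$: it asserts that $a_n=(-1)^n n!$ times the $n\times n$ determinant whose $(i,j)$ entry equals $D_1(i-j+1)$ for $i\ge j$, equals $1$ for $j=i+1$, and vanishes otherwise. Rewriting each $D_1(i-j+1)$ as $d_{i-j+1}/(i-j+1)!$ transforms the matrix into the one claimed here. Since Theorem \ref{th1-r} is already established for all $r\ge 1$, no further induction or algebraic manipulation is needed.

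The main (and essentially only) point deserving care is the collapse of the multi-index sum defining $D_r(e)$ when $r=1$; everything else is a relabeling of the determinant entries. As an alternative, one could give a self-contained proof by repeating the inductive expansion of the first row carried out in the proof of Theorem \ref{th1-r} verbatim, now with $D_1(l)=d_l/l!$ and the recurrence of Corollary \ref{recurrence} specialized to $r=1$. However, deriving the result as a corollary of the already-proved general case is cleaner and avoids duplicating the induction, so I would present it that way.
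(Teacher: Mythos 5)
Your proposal is correct and matches the paper's own route: the paper presents Theorem \ref{det:an} as the immediate $r=1$ specialization of Theorem \ref{th1-r}, and the only substantive step is exactly the collapse $D_1(e)=d_e/e!$ that you carry out. Nothing further is needed.
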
  

\section{Applications of Theorem \ref{th1-r}}
In this section, as applications of Theorem \ref{th1-r},  
we show several determinant expressions of special numbers, including the higher order generalized hypergeometric Bernoulli and Cauchy numbers introduced in Sec.1,
thus recovers the classical determinant expressions of Bernoulli and Cauchy numbers.

\begin{enumerate}
\item  {\it The generalized hypergeometric Bernoulli numbers.}

For positive integers $M$, $N$, put $$f(t)={}_1 F_1(M;M+N;t)=\sum_{n=0}^\infty\frac{(M)^{(n)}}{(M+N)^{(n)}}\frac{t^n}{n!},$$ in Theorem \ref{det:an}, we obtain the determinant
expression of the generalized hypergeometric Bernoulli numbers:
\begin{align*}
&B_{M,N,n}
=(-1)^n n!\\
&\times\left|
\begin{array}{ccccc} 
\frac{(M)^{(1)}}{(M+N)^{(1)}}&1&&&\\  
\frac{(M)^{(2)}}{2!(M+N)^{(2)}}&\frac{(M)^{(1)}}{(M+N)^{(1)}}&&&\\ 
\vdots&\vdots&\ddots&1&\\ 
\frac{(M)^{(n-1)}}{(n-1)!(M+N)^{(n-1)}}&\frac{(M)^{(n-2)}}{(n-2)!(M+N)^{(n-2)}}&\cdots&\frac{(M)^{(1)}}{(M+N)^{(1)}}&1\\ 
\frac{(M)^{(n)}}{n!(M+N)^{(n)}}&\frac{(M)^{(n-1)}}{(n-1)!(M+N)^{(n-1)}}&\cdots&\frac{(M)^{(2)}}{2!(M+N)^{(2)}}&\frac{(M)^{(1)}}{(M+N)^{(1)}} 
\end{array} 
\right|\,.
\end{align*}  
\label{det:ghbn}

Letting $M=N=1$ in the above, we obtain Glaisher's determinant expression of Bernoulli numbers (\ref{Berd-g}). 

\item {\it The  generalized hypergeometric Cauchy numbers.}

For positive integers $M$, $N$, put $$f(t)={}_2 F_1(M,N;N+1;-t)=\sum_{n=0}^\infty\frac{(M)^{(n)}(N)^{(n)}}{(N+1)^{(n)}}\frac{(-t)^n}{n!},$$ 
 in Theorem \ref{det:an}, we obtain the determinant
expression of the generalized hypergeometric Cauchy numbers:
\begin{align*}
&c_{M,N,n}
= n!\left|
\begin{array}{ccccc} 
\frac{M\cdot N}{N+1}&1&&&\\  
\frac{(M)^{(2)}N}{2!(N+2)}&\frac{M\cdot N}{N+1}&&&\\ 
\vdots&\vdots&\ddots&1&\\ 
\frac{(M)^{(n-1)}N}{(n-1)!(N+n-1)}&\frac{(M)^{(n-2)}N}{(n-2)!(N+n-2)}&\cdots&\frac{M\cdot N}{N+1}&1\\ 
\frac{(M)^{(n)}N}{n!(N+n)}&\frac{(M)^{(n-1)}N}{(n-1)!(N+n-1)}&\cdots&\frac{(M)^{(2)}N}{2!(N+2)}&\frac{M\cdot N}{N+1}
\end{array} 
\right|\,
\end{align*}  
\label{det:ghcn}
(also see \cite[Theorem 3]{KY}).
Letting $M=N=1$ in the above, we recover the classical determinant expression of Cauchy numbers (\cite[p.50]{Glaisher}):
\begin{align*}
&c_{n}
= n!\left|
\begin{array}{ccccc} 
\frac{1}{2}&1&&&\\  
\frac{1}{3}&\frac{1}{2}&&&\\ 
\vdots&\vdots&\ddots&1&\\ 
\frac{1}{n}&\frac{1}{n-1}&\cdots&\frac{1}{2}&1\\ 
\frac{1}{n+1}&\frac{1}{n}&\cdots&\frac{1}{3}&\frac{1}{2}
\end{array} 
\right|\,.
\end{align*}  
\label{det:cn}
 \end{enumerate}

\end{document}